\documentclass[a4paper,12pt,reqno]{amsart}

\usepackage{amsfonts}
\usepackage{amsmath}
\usepackage{mathrsfs}
\usepackage{amssymb}
\usepackage{amsthm}
\usepackage{amscd}

\usepackage{color}

\usepackage[all]{xy}
\usepackage{tikz}

\usepackage[
draft=false,
colorlinks, citecolor=darkgreen,
pdfauthor={L.Fassina,G Pirola},
pdftitle={},
linktocpage
]{hyperref}
\hypersetup{citecolor=blue,linktocpage}

\numberwithin{equation}{section}

\newtheorem{theorem}{Theorem}[section]

\theoremstyle{definition}
\newtheorem{remark}[theorem]{Remark}
\newtheorem{definition}[theorem]{Definition}

\newcommand{\nc}{\newcommand}
\nc{\cL}{{\mathcal L}}
\nc{\cM}{{\mathcal M}}
\nc{\bC}{{\mathbb C}}

\nc{\bN}{{\mathbb N}}
\nc{\bP}{{\mathbb P}}

\nc{\OO}{\mathcal{O}}

\begin{document}

\title[Normal functions and the theta divisor]{Normal functions, even theta
characteristics and the theta divisor}

\date{\today}

\author[I. Biswas]{Indranil Biswas}

\address{Department of Mathematics, Shiv Nadar University, NH91, Tehsil
Dadri, Greater Noida, Uttar Pradesh 201314, India}

\email{indranil.biswas@snu.edu.in, indranil29@gmail.com}

\author[L. Fassina]{Lorenzo Fassina}

\address{Dipartimento di Matematica, Universit\`a degli Studi di Pavia, Via Ferrata 5, 27100 Pavia, Italy}

\email{lorenzo.fassina02@universitadipavia.it}

\author[G. P. Pirola]{Gian Pietro Pirola}

\address{Dipartimento di Matematica, Universit\`a degli Studi di Pavia, Via Ferrata 5, 27100 Pavia, Italy}

\email{gianpietro.pirola@unipv.it}

\subjclass[2020]{14H40, 47A10, 53C27, 14H42, 14H15}

\keywords{Theta characteristic, spin Laplacian, normal function, theta divisor}

\begin{abstract}
Let $[C]$ be a general point in the moduli space of curves $\mathcal{M}_g$ with $g\, >\,
1$. Let $G \,\subset\, J(C)$ be a connected compact subgroup of real dimension $1$ of
the Jacobian, and let $L$ be an even theta characteristic on $C$. We prove that $\{\zeta
\, \in\, G\ \big\vert\,\ H^0(C,\, L \otimes \zeta)\,\not=\, 0 \}\, =\, \emptyset$ if
and only if $L \otimes \zeta_G$ is an even theta characteristic on $C$, where $\zeta_G$
is the unique non-trivial point of $G$ of order two.
\end{abstract}

\maketitle

\section*{Introduction}

Theta characteristics on compact Riemann surfaces are classical objects of study in algebraic geometry
and they play a crucial role in the theory of spin structures. Recall that for a compact connected Riemann
surface $C$ of genus $g \,\ge\, 2$, a \emph{theta characteristic} on $C$ is a
holomorphic line bundle $L$ on $C$ such that 
\[
L^{\otimes 2}\ \simeq\ \omega_C,
\]
where $\omega_C$ denotes the canonical line bundle of $C$ (see Definition \ref{def:theta_char}). These objects
are naturally identified with spin structures on the underlying Riemann surface. More specifically,
if $\Sigma$ is a Riemann surface equipped with a spin structure, the associated complexified spinor bundle
$S_{\mathbb{C}}$ on $\Sigma$ decomposes
as $S_+ \oplus S_-$ where $S_+$ is a holomorphic line bundle satisfying the condition
that $(S_+)^{\otimes 2}$ is holomorphically isomorphic to the holomorphic cotangent bundle
$\omega_\Sigma$ of $\Sigma$. Thus, $S_+$ is a theta characteristic of
$\Sigma$. Conversely, every theta characteristic of $\Sigma$ uniquely determines a spin structure
on $\Sigma$ \cite{Atiyah}.

A recent work by Adve and Giri \cite{A-A-G-Vikram} investigates the spectral properties of the Dirac operator 
$\mathcal{D}$ acting on sections of the spinor bundle over a compact Riemann surface. Let $\Sigma$ be a Riemann 
surface with a fixed spin structure, and let $S_+$ be the corresponding theta characteristic on $\Sigma$. Let 
$\lambda_0^{\mathrm{spin}}(\Sigma)$ denote the smallest eigenvalue of the spin Laplacian 
$-\Delta^{\mathrm{spin}} = -\mathcal{D}^2$. The \emph{spectral gap} for the Dirac operator is defined as 
$\sqrt{\lambda_0^{\mathrm{spin}}(\Sigma)}$. Let
\[
\widehat{H_1(\Sigma,\, \mathbb{Z})}\ =\ \mathrm{Hom}(H_1(\Sigma,\, \mathbb{Z}),\ {\rm U}(1))
\]
be the character torus (the Pontryagin dual of the first homology group), which parametrizes the
flat unitary line bundles on $\Sigma$. For each character $\chi \,\in\, \widehat{H_1(\Sigma,\, \mathbb{Z})}$,
let $M_\chi$ denote the corresponding flat holomorphic line bundle on $\Sigma$.

In this setting, the spectral theory of the spin Laplacian is linked to the geometry of the twisted bundles 
$S_+ \otimes M_\chi$. More specifically, the vanishing of the first eigenvalue of the spin Laplacian is equivalent 
to the existence of non-zero holomorphic sections of these twisted bundles; indeed, a section of $S_+ \otimes 
M_\chi$ is holomorphic if and only if it lies in the kernel of the Dirac operator (cf. \cite[Lemma 
2.6]{A-A-G-Vikram}).

A key role in \cite{A-A-G-Vikram} is played by subtori of the character torus. If $T \,\subset\,
\widehat{H_1(\Sigma,\, \mathbb{Z})}$ is a closed subtorus of real dimension 1 and $T[n]\, \subset\, T$
being its subgroup of $n$-torsion points, one considers the sequence of finite abelian covers
$\Sigma_{T[n]}$ associated with the dual groups $T[n]^*$. It is shown in \cite[Proposition 2.3]{A-A-G-Vikram}
that the existence of a uniform spectral gap along such a sequence of coverings is equivalent to the
following vanishing condition:
\[
H^0(\Sigma,\, S_+ \otimes M_\chi)\ =\ 0 \quad \text{for all }\ \chi \ \in\ T.
\]
Moreover in \cite{A-A-G-Vikram} the authors provide an explicit construction of a genus $2$ curve and a specific 1-dimensional subtorus of characters ensuring a uniform spectral gap for the Dirac operator along the corresponding tower of covers.
These analytic results naturally lead to an algebro-geometric question, which is explained below. Let
$C$ be a smooth complex projective curve of genus $g \,\ge\, 2$, and set $L\,:=\, S_+$. The Jacobian $J(C) \,=
\, \mathrm{Pic}^0(C)$ parametrizes the holomorphic line bundles of degree zero on $C$. For a fixed theta
characteristic $L$ on $C$, the subscheme
\[
\Theta_L\ =\ \{ \xi\, \in\, J(C)\ \, \big\vert\ \ H^0(C,\, L \otimes \xi)\, \not=\, 0 \}\ \subset\
J(C)
\]
is the symmetric theta divisor associated with $L$. Thus, the above mentioned vanishing condition on $T$ is
equivalent
to the condition that the subtorus $T$ avoids the theta divisor $\Theta_L$.

Here we investigate this problem for real subtori of the Jacobian. Let $G\, \subset\, J(C)$ be
a connected compact subgroup of real dimension one (a real 1-torus). For a given theta characteristic $L$, we define the intersection locus:
\[
G_L\ :=\ \{ \zeta\,\, \in\,\, G\ \, \big\vert\,\ H^0(C,\, L \otimes \zeta)\, \not=\, 0 \}\ =\ G \cap \Theta_L.
\]
The $G_L$ parametrizes the twists of $L$ along $G$ that admit non-zero holomorphic sections. Our main objective is to characterize the conditions under which this set is empty for a general curve. We show that this property is determined by the parity of a specific theta characteristic related to the 2-torsion structure of $G$.

\begin{theorem}[{Theorem \ref{Main th}}]\label{ti}
Let $[C]$ be a general point in the moduli space of curves $\mathcal{M}_g$ with $g\, >\, 1$. Let
$G \,\subset\, J(C)$ be a connected compact subgroup of real dimension $1$, and let $L$ be an even theta
characteristic on $C$. Then 
\[
G_L\ \, =\ \, \emptyset
\]
if and only if $L \otimes \zeta_G$ is an even theta characteristic on $C$, where $\zeta_G$
is the unique non-trivial point of $G$ of order two (see Definition \ref{Def_L-even}).
\end{theorem}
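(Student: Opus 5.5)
Write $G=\{\exp(tv)\mid t\in\mathbb{R}\}$ for a primitive lattice vector $v\in\Lambda=H_1(J(C),\mathbb{Z})$, so that $G\simeq \mathbb{R}/\mathbb{Z}$ and $\zeta_G$ is the image of $t=1/2$. The proof splits into two directions, handled by different mechanisms.

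\emph{Necessity (a parity argument).} Suppose $L\otimes\zeta_G$ is odd. Then $h^0(L\otimes\zeta_G)$ is odd, hence nonzero, so $\zeta_G\in G_L$ and $G_L\neq\emptyset$. This needs no generality.

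\emph{Sufficiency.} Assume $L$ and $L\otimes\zeta_G$ are both even. We must show that $G$ misses $\Theta_L$ for general $C$. Since $C$ is general, every even theta characteristic has $h^0=0$, so $0,\zeta_G\notin\Theta_L$. The symmetric divisor $\Theta_L$ is the zero locus of $\theta_L(z)=\theta[\text{char}](z,\tau)$, and $G$ is a real circle, a real-codimension-$(2g-1)$ object. The point is that, unlike the odd case, nothing forces $G$ to meet $\Theta_L$; we need to show that it does not. I would argue via degeneration and/or monodromy.

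\emph{Monodromy reduction.} The pair $(L,v)$ up to the $Sp(2g,\mathbb{Z})$ action is determined by the parity data $q_L(0)$ and $q_L(v/2)$, where $q_L$ is the quadratic form attached to $L$. Hence it suffices to produce \emph{one} curve $C_0$ (possibly a stable limit) and one configuration with both parities even such that $G\cap\Theta_L=\emptyset$. Openness then spreads this to nearby curves: since $G$ is compact, the condition $G\cap\Theta=\emptyset$ is open in $\mathcal{M}_g$ (or $\mathcal{A}_g$). Transitivity of $Sp(2g,\mathbb{Z})$ on such pairs, together with irreducibility of the corresponding cover of $\mathcal{M}_g$, then gives the result for general $C$. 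One caveat: ``general'' must be read as a dense open set depending on $(L,G)$, or as a countable intersection over the countably many $G$.

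\emph{Construction of the example (main obstacle).} The hardest step is exhibiting an explicit curve where $G$ avoids $\Theta_L$. I would first try a degeneration to a product of elliptic curves, or a reducible curve $E_1\cup\dots\cup E_g$, in the direction $v\in H_1(E_1)$. Then $\Theta_L$ splits as $\bigcup_i \pi_i^{-1}\Theta_{E_i}$, where $\Theta_{E_i}$ is the single odd point translated. The circle $G\subset E_1$ avoids the relevant point exactly when both $L_1$ and $L_1\otimes\zeta_G$ are even on $E_1$. In that case the circle in direction $v$ passing through $0$ and $v/2$ misses the 2-torsion point $v'/2$ or $(v+v')/2$. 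For the even characteristic, $\theta$ vanishes only at the odd 2-torsion point, and the circle's points $tv$ never equal a half-period not on the line through $v$. The other factors contribute nothing, since their components of $G$ are at $0$, which is even.

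Since the product of elliptic curves lies in $\mathcal{A}_g$ but not in the interior of $\mathcal{M}_g$, openness in $\mathcal{A}_g$ applied to the Torelli image finishes the argument, because Jacobians are dense near the product locus. This step requires some care, and the other odd cases (where $v$ is not in a single factor) are handled by the reduction via $Sp$ transitivity.
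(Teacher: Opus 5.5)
Your necessity direction is correct and matches the paper's trivial direction. For sufficiency, your ingredients are sound: the $Sp(2g,\mathbb{Z})$-orbit classification by the parities of $L$ and $L\otimes\zeta_G$ holds (your ``$q_L(0)$'' should read ``parity of $L$''), and so does the example on $E_1\times\cdots\times E_g$ with all factors even, pushed to nearby smooth Jacobians by openness. But these give only a \emph{nonempty open} set of curves on which $G\cap\Theta_L=\emptyset$.

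The gap is the step ``together with irreducibility of the corresponding cover, then gives the result for general $C$.'' Upgrading ``nonempty open'' to ``dense'' by irreducibility works only when the bad locus
\[
B\ =\ \{C\ :\ G\cap\Theta_L(C)\neq\emptyset\}
\]
is a closed complex-analytic subset, and it is not. $B$ is the projection of the real-analytic set $\{(C,x) : x\in G,\ x\in\Theta_L(C)\}$, and is typically a real hypersurface. A closed real semi-analytic set can have nonempty interior while its complement is also nonempty and open (a closed half-space, for instance). Whether $B$ can contain an open set is exactly the content of the theorem, so this step cannot be skipped.

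The missing idea is the one the paper relies on. Fix $x\in G$, constant in the flat trivialization, so $x=\tau a+b$ with $(a,b)\in\mathbb{R}^{2g}$ fixed. Then the slice $Z_x=\{C : x\in\Theta_L(C)\}$ is complex analytic: it is the zero locus of the theta constant with real characteristic $\theta[\epsilon_L+(a,b)](0,\tau(C))$, which is holomorphic on Teichm\"uller space. This is the Hain-type fact the paper cites for the fibres of the flat trivialization on the relative theta divisor.

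The paper uses it as follows. If $B$ contained an open set, analyticity of the slices forces a single $Z_x$ to contain an open set. The Fassina--Pirola theorem on locally constant normal functions then makes $L\otimes x$ an odd theta characteristic, so $x=\zeta_G$ and $G$ is $L$-odd.

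Once you add analyticity of the slices, your route also closes, and by a genuinely different mechanism. Your example shows that no $Z_x$ with $x\in G$ is all of the connected Teichm\"uller space. Hence each $Z_x$ is a proper analytic subset, of real codimension at least $2$. Then $B=\bigcup_{x\in G}Z_x$ has real dimension at most $6g-7$ and is nowhere dense, and Baire over the countably many pairs $(L,G)$ gives very general $C$. That version would replace the Fassina--Pirola theorem by an explicit boundary example plus monodromy. As written, however, your proposal contains no analyticity step, so the density claim is unsupported.
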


Theorem \ref{ti} provides a geometric criterion for the vanishing condition arising in the study
of Dirac operators. Specifically, it exactly describe the real one-dimensional subtori of the Jacobian
that fail to intersect the theta divisor. In this sense, Theorem \ref{ti} serves as an algebro-geometric
counterpart to the spectral gap conditions established in \cite{A-A-G-Vikram}.

\vspace{1em}
 
\subsection*{Structure of the paper}

The paper is organized as follows. In Section \ref{sec:1}, we recall the basic notation for projective 
curves, introducing the Jacobian variety and the theta divisor. In Section \ref{sec:2}, we provide the 
necessary background on the relative Picard bundle and spin sections, recalling the definition and the rank 
of normal functions. We also highlight a key result concerning the local constancy of normal functions which 
serves as a fundamental tool for our analysis (see Theorem \ref{coro-spin}). Section \ref{sec:3} is devoted to the 
proof of Theorem \ref{Main th}, providing the $L$--even criterion for the vanishing of the intersection locus 
$G_L\, =\, G \cap \Theta_L$ on a general curve.

\section{Preliminaries}\label{sec:1}

Throughout this paper, the base field is assumed to be the field of complex numbers $\mathbb{C}$. Let $C$
be an irreducible smooth connected projective curve. Denote by $\omega_C$ the canonical line bundle of $C$, so
\[
g\ :=\ \dim_{\mathbb{C}} H^0(C, \,\omega_C)
\]
coincides with the genus of $C$. Any divisor associated with $\omega_C$ is called a
\emph{canonical divisor}; such a divisor is denoted by $K_C$, and
the equality $\deg(K_C) \,=\, 2g - 2$ holds.

Let $\mathrm{Div}(C)$ denote the group of divisors on $C$ and $\mathrm{Div}^d(C)\, \subset\,
\mathrm{Div}(C)$ the subset of divisors of degree $d$. For any integer $d$, the \emph{Picard variety} of
degree $d$ is defined to be
\[
\mathrm{Pic}^d(C)\ :=\ \mathrm{Div}^d(C) / \sim,
\]
where $\sim$ denotes the linear equivalence. Equivalently, $\mathrm{Pic}^d(C)$ parametrizes
all the isomorphism classes of holomorphic line bundles of degree $d$ on $C$.

Let $H^0(C,\, \omega_C)^*\,=\, H^1(C,\, {\mathcal O}_C)$ denote the dual vector space of holomorphic
differentials on $C$. Integration of holomorphic $1$-forms along closed paths in $C$ defines a full lattice
\[
\Lambda\ \subset\ H^0(C,\, \omega_C)^*
\]
given by the image of the integral homology group $H_1(C,\, \mathbb{Z})$. The quotient
\[
J(C)\ :=\ H^0(C,\, \omega_C)^* / \Lambda
\]
is a complex torus of dimension $g$, called the \emph{Jacobian variety} of the curve
$C$. By the Abel--Jacobi theorem, there is a canonical isomorphism $J(C)\, \simeq\, \mathrm{Pic}^0(C)$.

For a line bundle $L$ on $C$, we denote the dimension of its space of global holomorphic sections by
\[
h^0(C,\, L)\ :=\ \dim_{\mathbb{C}} H^0(C,\, L).
\]

For every integer $d \,\ge\, 1$, let $C^{(d)}\ :=\ C^d / \mathfrak{S}_d$ be the symmetric product
of $C$, where $\mathfrak{S}_d$ is the symmetric group, and consider the Abel map
\[
 A_d\ \colon\ C^{(d)}\ \longrightarrow\ \mathrm{Pic}^d(C), \qquad \{p_1, \cdots ,\, p_d\}\ \longmapsto
\ \mathcal{O}_C(p_1 + \cdots + p_d).
\]
Consider the image $W_d(C)\,:=\, A_d(C^{(d)})$ of the map $A_d$. Intrinsically, it can be defined as
\begin{equation}\label{ew}
W_d(C)\ :=\ \{ L \, \in\, \mathrm{Pic}^d(C)\, \ \big\vert\, \ h^0(C,\, L)\, >\, 0\}.
\end{equation}
In particular, for $d \,= \, g - 1$, the image $W_{g-1}(C)$ is a divisor in $\mathrm{Pic}^{g-1}(C)$. 

After fixing a base point $p_0 \,\in\, C$, the Abel--Jacobi theorem provides an identification
$\mathrm{Pic}^{g-1}(C) \, \simeq\, J(C)$ that sends any $L\, \in\, \mathrm{Pic}^{g-1}(C)$
to $L\otimes{\mathcal O}_C(-(g-1)p_0)\, \in\, J(C)$. Under this identification, the divisor
$W_{g-1}(C)$ translates to a divisor in the Jacobian, called the \emph{theta divisor} (a
classical treatment of the theta divisor can be found in \cite[Chap.~2, Sec.~7]{GHarris}). More
canonically, there exists a divisor class $\ell \,\in\, \mathrm{Pic}^{g-1}(C)$ (the 
\emph{Riemann constant}) such that the theta divisor is given by
\[
\Theta\ :=\ W_{g-1}(C) - \ell\ \subset\ J(C);
\]
it should be clarified that $\Theta$ depends on some choices, while $W_{g-1}(C)$ is completely canonical.

\begin{definition}\label{def:theta_char}
A \emph{theta characteristic} on an irreducible smooth projective curve $C$ of genus
$g$ is a line bundle $L \,\in\, \mathrm{Pic}^{g-1}(C)$ such that
\[
L^{\otimes 2}\ \,\simeq\ \,\omega_C.
\]
A theta characteristic $L$ is called \emph{even} (respectively, \emph{odd}) if $h^0(C,\, L)$ is
even (respectively, odd).
\end{definition}

\begin{remark}
The set of theta characteristics forms a principal homogeneous space (a torsor) under the translation
action of the subgroup $J(C)[2]\, \subset\, J(C)$ given by the $2$--torsion points of the Jacobian.
\end{remark}

Theta characteristics are closely related to the geometry of the theta divisor. Given a theta
characteristic $L$, tensoring by $L^{-1}$ induces an isomorphism
\[
\mathrm{Pic}^{g-1}(C)\ \longrightarrow\ J(C), \qquad M \ \longmapsto M\ \otimes L^{-1}.
\]
Under this map, $W_{g-1}(C)$ (see \eqref{ew}) translates to a divisor in $J(C)$ --- which is
denoted by $\Theta_L$ --- that has the following description:
\begin{equation}\label{theta L}
\Theta_L\ :=\ W_{g-1}(C) - L \ =\ \{\xi\, \in\, J(C)\,\ \big\vert\,\ H^0(C,\, \xi\otimes L)\, \not=\, 0\}
\ \subset\ J(C).
\end{equation}

The divisor $\Theta_L$ in \eqref{theta L} is called the \emph{symmetric theta divisor} associated with the
theta characteristic $L$. It is symmetric with respect to the canonical involution
$\zeta \, \longmapsto\, \zeta^{-1}$ of the Jacobian $J(C)$, which is in fact an immediate consequence
of the Serre duality. For more on theta characteristics we refer the reader to
 \cite[Chap.~6]{Mumford-Theta} and \cite{Cornalba-theta-char}.

\section{Normal functions and spin sections}\label{sec:2}

In this section we introduce the main objects appearing in Theorem \ref{coro-spin}, namely the relative 
Picard bundle, spin sections and normal functions.
Although these concepts are classical, they are briefly recall in a minimal form that suffices for our
purpose. A comprehensive account of the Picard bundle can be found in \cite[Chap.~4, Section~2]{ACG1}, and
more details on normal functions can be found in \cite[Chap.~2, Sec.~7]{Voisin}, \cite[Chap.~9]{Carlson};
we refer to \cite[Section~3]{Hain} for the rank of a normal function.

\subsection{Picard bundle and spin sections}

Let
\begin{equation}\label{e1}
\pi\ :\ \mathcal{C}\ \longrightarrow\ Y
\end{equation}
be a smooth algebraic family of complete complex connected smooth curves of 
genus $g$ with $Y$ being a connected complex variety. For every point $y\, \in\, Y$,
the fiber $C_y\, :=\, \pi^{-1}(y)$ is a connected smooth projective curve of genus $g$. Denoting the moduli space
of smooth curves of genus $g$ by $\mathcal{M}_g$,
\begin{equation}\label{e2}
m\ :\ Y\ \longrightarrow\ \mathcal{M}_g, \qquad m(y)\, =\, [C_y]
\end{equation}
is the natural classifying map for the family $\pi$ in \eqref{e1}. We will be mostly
considering families of curves for which the map $m$ in \eqref{e2} is actually
dominant, which means that the fibers of $\pi$ are general in moduli.

The \emph{relative Picard bundle} for the family $\pi$ in \eqref{e1} is
\[
\mathrm{Pic}(\pi)\,\ :=\,\ R^1 \pi_* \mathcal{O}_{\mathcal{C}}^*.
\]
Its fiber over any $y \,\in\, Y$ is the Picard group $\mathrm{Pic}(C_y)$ of the fiber $C_y$.
Denote by $\mathrm{Pic}^d(\pi) \, \subset\, \mathrm{Pic}(C_y)$ the sub-fiber bundle whose fiber
over any $y \,\in\, Y$ is the connected component of $\mathrm{Pic}(C_y)$ that parametrizes the line
bundles of degree $d$ on $C_y$.

A particularly important section of $\mathrm{Pic}(C_y)$ is the \emph{canonical section} $\kappa$ given by
the relative canonical bundle for $\pi$; in other words,
\begin{equation}\label{e3}
\kappa\ :\ Y\ \longrightarrow\ \mathrm{Pic}^{2g-2}(\pi), \qquad y\ \longmapsto\ K_{C_y},
\end{equation}
where $K_{C_y}$ is the canonical line bundle of $C_y$.

\begin{definition}
A section $\psi\,:\, Y \,\longrightarrow \,\mathrm{Pic}^{g-1}(\pi)$ of
is called a \emph{spin section} if
\[
2 \cdot \psi\ =\ \kappa
\]
(see \eqref{e3}), or equivalently, the line bundle $\psi(y)$ on $C_y$ is a theta characteristic
for every $y \,\in\, Y$. The
parity of $\psi$ is defined as the parity of $\psi(y)$; since $Y$ is connected,
from \cite[p.~184, Theorem]{Mu} it follows immediately that $\psi(y)$ is actually independent of $y$.
\end{definition}

\subsection{Normal functions}

The degree-zero component $\mathrm{Pic}^0(\pi)$ of the Picard bundle coincides with the Jacobian fibration
\begin{equation}\label{ej}
J(\pi)\,\ \simeq \,\ \mathrm{Pic}^{0}(\pi)\, \ \longrightarrow\,\ Y.
\end{equation}
Sections of $J(\pi)$ are called \emph{normal functions}.

A basic example of a normal function that will be used later is the following. Let
\begin{equation}\label{e4}
\psi\ :\ Y\ \longrightarrow\ \mathrm{Pic}^{g-1}(\pi)
\end{equation}
be a global section of the Picard bundle of degree $(g-1)$ line bundles. Then we define the associated
\emph{normal function}
\[
\nu\ \, :=\ \, 2\psi - \kappa,
\]
where $\kappa$ is constructed in \eqref{e3}; it measures the difference from $\psi$ being a spin section. 

To define the rank of a normal function, recall that $J(\pi)$ is diffeomorphic to a real torus bundle
\begin{equation}\label{diffeo-tori}
f\ :\ J(\pi)\ \xrightarrow{\,\,\,\sim\,\,\,}\ \frac{R^1\pi_*\mathbb{R}}{R^1\pi_*\mathbb{Z}}.
\end{equation}
Over a simply connected open subset $U \,\subset\, Y$ the above bundle $\frac{R^1\pi_*\mathbb{R}}{R^1
\pi_*\mathbb{Z}}$ is trivial; the restriction of $J(\pi)$ to $U$ is isomorphic to $T^{2g} \times U$, where
$T^{2g}$ is a real torus of dimension $2g$. Denoting by $p_1$ the natural projection $T^{2g} \times U\,
\longrightarrow\, T^{2g}$, consider the composition of maps
$$
J(\pi)_{|_U}\ \xrightarrow{\,\,\,f\vert_U\,\,\,}\ T^{2g} \times U\ \xrightarrow{\,\,\,p_1\,\,\,}\ T^{2g},
$$
where $f$ is the map in \eqref{diffeo-tori}, and denote
\begin{equation}\label{phi}
\phi\,\ :=\,\ p_1 \circ (f\big\vert_U).
\end{equation}

\begin{definition}
Let $\nu\,:\, Y\, \longrightarrow\, J(\pi)$ be a normal function. Define
the rank of $\nu$ to be
\[
\mathrm{rk} (\nu)\ =\ \frac{\dim((\phi\circ \nu )(U))}{2},
\]
where $\dim$ is the real dimension and $\phi$ is the map in \eqref{phi}. A normal function $\nu$ is said to be
\emph{locally constant} if we have $\mathrm{rk}(\nu)\, =\, 0$.
\end{definition}

\begin{remark}
Following Hain, \cite[Section~3]{Hain}, the rank of $\nu$ is an integer because the fibers of $\phi\circ \nu$
are complex subvarieties of $J(\pi)\big\vert_U$ implying that the dimension of the image of $\phi\circ \nu$ is even.
\end{remark}

\begin{remark}
An important example of a locally constant normal function is given by a torsion section.
\end{remark}

The following result proved in \cite{FassinaPirola} on the local constancy of normal functions will be used here.

\begin{theorem}[{\cite[Theorem 6.3]{FassinaPirola}}]\label{coro-spin}
Take $\pi$ as in \eqref{e1} with $g\, \geq\, 2$ such that the corresponding map $m$ in \eqref{e2} is dominant.
Let $\psi$ be a section of the Picard bundle $\mathrm{Pic}^{g-1}(\pi)$ such that
$\dim H^0(C_y,\, \psi(y)) \,>\, 0$ for a general point $y \,\in\, Y$.
For the associated normal function
\[
\nu\ :=\ 2\psi - \kappa ,
\]
if $\mathrm{rk}(\nu) = 0$, i.e., if $\nu$ is locally constant, then $\psi$ is actually an odd spin section.
\end{theorem}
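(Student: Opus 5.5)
The plan is to show in three steps that $\nu$ must be a torsion section, then a $2$-torsion one, and finally to read off the parity. Throughout, I would work on a simply connected open set $U\subset Y$ over which the torus bundle \eqref{diffeo-tori} is trivialized. There I fix a local spin section $\theta$ with $2\theta=\kappa$; it exists locally because the theta characteristics form an étale cover of $Y$.

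\textbf{Step 1: rewrite $\psi$ using a fixed flat character.} Since $\mathrm{rk}(\nu)=0$, the map $\phi\circ\nu$ is constant on $U$. Hence $\psi-\theta$ is a section of $J(\pi)|_U$ with $2(\psi-\theta)=\nu$. Its image under $\phi$ lies in the finite preimage under multiplication by $2$ of a point, so by continuity it is constant. Under \eqref{diffeo-tori}, a constant point of $T^{2g}$ is exactly a fixed unitary character $\chi\in\widehat{H_1(C,\mathbb Z)}$. We therefore obtain
\[
\psi(y)\ \simeq\ \theta(y)\otimes M_\chi\qquad (y\in U),
\]
with $\chi$ independent of $y$, and by hypothesis $h^0(C_y,\theta(y)\otimes M_\chi)>0$ for $y\in U$.

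\textbf{Step 2: $\chi$ is torsion.} Since $\psi$ and $\kappa$ are global algebraic sections over $Y$, analytic continuation of $\nu$ around loops in $Y$ returns $\nu$ itself. Consequently the point $2\chi=\phi(\nu)$ is fixed by the monodromy group $\Gamma=\mathrm{Im}\bigl(\pi_1(Y)\to \mathrm{Sp}(2g,\mathbb Z)\bigr)$. Because $m$ is dominant, $\Gamma$ has finite index in $\mathrm{Sp}(2g,\mathbb Z)$, so it is Zariski dense in $\mathrm{Sp}(2g,\mathbb R)$ by Borel density. A point of $\mathbb R^{2g}/\mathbb Z^{2g}$ fixed by such a group lies in $\mathbb Q^{2g}/\mathbb Z^{2g}$, because the fixed vectors of a finite-index subgroup acting on $\mathbb R^{2g}$ are zero up to the lattice. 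Hence $\nu$ is a torsion section, and $\chi$ is a torsion point, say of exact order $n$.

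\textbf{Step 3: $n\le 2$ (main obstacle).} Step 2 leaves a torsion line bundle $\eta=M_\chi$ of order $n$, which is a multisection over $Y$, such that $\theta\otimes\eta$ is effective on the general curve. We must rule out $n\ge 3$, i.e. show that on a general curve no translate $\Theta_\theta+\eta$ by a torsion point of order $\ge3$ passes through the origin. I would try two routes.

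The first is degeneration, following the theta-characteristic arguments of Cornalba and Harris. Degenerate to a chain of elliptic curves or a nodal curve where limit linear series are computable. For a suitable degeneration of the pair $(\theta,\eta)$, the limit of $\theta\otimes\eta$ on each elliptic component is a degree-$0$ twist $\theta_E\otimes\eta_E$. This twist is non-effective unless $\eta_E$ is $2$-torsion compensating $\theta_E$. Monodromy lets us move $\eta$ to any class of order $n$, so we may choose a degeneration where some component sees an $\eta_E$ of order $\ge3$; the limit is then non-effective, a contradiction.

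The second route, which I would try if degeneration becomes messy, is infinitesimal. Differentiate the condition $h^0>0$ along the family, with $\eta$ rigid because it is torsion. This gives $s\cdot\sigma\cdot\mu=0$ for every section $s$ of $\theta\otimes\eta$, every section $\sigma$ of $\theta\otimes\eta^{-1}$ (obtained by Serre duality), and every $\mu\in H^1(T_C)$. That forces $s\sigma=0$ in $H^0(\omega_C^2)$, which is impossible; this works if we can arrange $h^0(\theta\otimes\eta)=1$, via the usual Pirola-type argument.

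\textbf{Conclusion.} Once $n\le2$, $\eta$ is trivial or $2$-torsion, so $\psi=\theta\otimes\eta$ is a spin section. By hypothesis it is effective on a general curve. Even theta characteristics on a general curve have $h^0=0$, since the theta-null divisor is a proper divisor in moduli. Hence $\psi$ is odd.
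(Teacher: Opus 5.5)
The paper only quotes this theorem from Fassina--Pirola, so your argument has to stand on its own. Steps~1 and~2 and the final appeal to the vanishing of even theta characteristics on a general curve are fine. The gap is Step~3: it carries the entire content of the theorem, and neither of your routes works as written.

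\textbf{Route 2.} The pairing is misidentified. The product $s\sigma$ lies in $H^0(C,\omega_C)$, not $H^0(C,\omega_C^{\otimes 2})$, and $\mu\cdot s\sigma$ lies in $H^1(C,\mathcal{O}_C)$, not in $\mathbb{C}$. More tellingly, the argument never uses $n\ge 3$. For $\eta$ trivial and $\theta$ odd you have $\sigma=s$, so it would prove that odd theta characteristics are non-effective on a general curve, which is false.

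What is missing is the correct first-order obstruction. The deformation class $\xi_\mu$ of $\psi=\theta\otimes M_\chi$ along $\mu=[v_{ij}]$ acts by the spin Lie derivative on $\theta$ and by the flat connection $\nabla$ on $M_\chi^{\pm1}$. Locally, write $s=f\,(dz)^{1/2}e$ and $\sigma=h\,(dz)^{1/2}e^{*}$ in flat frames. Then $(\xi_\mu s)\,\sigma=\tfrac12\mathcal{L}_v(s\sigma)+\tfrac12\,v\,(f'h-fh')\,dz$. The first term is $d$-exact, hence zero in $H^1(C,\omega_C)$. Therefore $\langle\xi_\mu s,\sigma\rangle=\tfrac12\langle\mu,W(s,\sigma)\rangle$, where $W(s,\sigma)=\sigma\nabla s-s\nabla\sigma=-s^{2}\,\nabla(\sigma/s)\in H^0(C,\omega_C^{\otimes2})$. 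Now $W(s,\sigma)=0$ iff $\sigma/s$ is a flat meromorphic section of $M_\chi^{-2}$, hence holomorphic and nowhere vanishing, i.e.\ iff $\chi^2=1$.

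With this identity the proof closes. At a general $y$, $h^0(\psi(y))$ is locally constant, so every section extends and $\xi_\mu s=0$ in $H^1(C_y,\psi(y))$. Also, the Kodaira--Spencer map is onto $H^1(C_y,T_{C_y})$. Since $h^0(\omega_{C_y}\otimes\psi(y)^{-1})=h^0(\psi(y))>0$, you get $W(s,\sigma)=0$, hence $\chi^2=1$. You need neither $h^0=1$ nor Step~2, because the locally constant $M_\chi$ from Step~1 already carries the holomorphic flat connection used here.

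\textbf{Route 1.} This also breaks as stated. Take an elliptic chain $E_1\cup\cdots\cup E_g$ with $p_i-q_i$ non-torsion. The aspect on $E_i$ of the limit of $\psi$ has degree $g-1$ and equals $\xi_i\otimes\mathcal{O}_{E_i}((i-1)p_i+(g-i)q_i)$, with $\xi_i\in\mathrm{Pic}^0(E_i)$ torsion. It is not a degree-$0$ bundle that must be effective. The Eisenbud--Harris inequalities $a_i+b_i\le g-1$ and $b_i+a_{i+1}\ge g-1$ only force equality on \emph{one} component, i.e.\ $\xi_i=0$ for a single $i$. So one component seeing an $\eta_E$ of order $\ge3$ gives no contradiction: already for $g=2$, $\xi_1=0$ yields a limit $g^0_1$ whatever $\xi_2$ is.

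A repair would need every $\xi_i\neq0$. That requires two further steps. First, propagate effectivity from the finite-index monodromy $\Gamma$ to the full $\mathrm{Sp}(2g,\mathbb{Z})$-orbit, using closedness of $h^0>0$ on the component of the finite cover of $\mathcal{M}_g$ dominated by $Y$. Second, choose the degeneration so that all components of the limit are nonzero.
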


\section{One-dimensional real subtori and the theta divisor}\label{sec:3}

Let $C$ be an irreducible smooth complex projective curve of genus $g$, with $g \,\geq\, 2$. Let $G$ be a connected
compact subgroup of real dimension 1 of the Jacobian $J(C) \,=\, \mathrm{Pic}^0(C)$ of $C$. Such subgroups are
real subtori of the complex abelian variety $J(C)$; see \cite[Chapter~2]{Mumford-abelian}. Take a theta
characteristic $L$ of $C$. Set
\begin{equation}\label{e5}
G_L\ :=\ \{\zeta\, \in\, G\,\,\big\vert\,\, h^0(C,\, L\otimes \zeta)\, >\, 0\}\ =\ G \cap \Theta_L,
\end{equation}
where $\Theta_L$ is
symmetric theta divisor defined by \eqref{theta L}. We are seeking conditions that warranty
the emptiness of the locus $G_L$ in \eqref{e5}. Observe that a necessary condition
for $G_L\,=\,\emptyset$ is that $h^0(L) \,=\, 0$, as otherwise the origin
${\mathcal O}_C\, \in\, G_L \, \subset\, J(C)$ belongs to $G_L$. Also, if $G_L\,=\,\emptyset$, and
$\zeta_G$ is the (unique) non-trivial two-torsion point of $G$ (recall that $\dim_{\mathbb R}G \,=\,
1$), then $h^0(L\otimes \zeta_G)\,=\, 0$.

\begin{definition}\label{Def_L-even}
We say that $G$ is $L$--even (respectively, $L$--odd) if $L' \,:=\, L \otimes \zeta_G$ is an even (respectively,
odd) theta characteristic on $C$, where $\zeta_G$ as before is the unique non-trivial point of $G$ of order two.
\end{definition}

\begin{remark}Note that $L'\,=\,L\otimes \zeta_G$ is a theta characteristic of $C$. We have
$\xi\, \in\, G_{L'}$ if and only if $\xi\otimes \zeta_G\, \in\, G_L$.
\end{remark}

\begin{theorem}\label{Main th}
Let $[C]$ be a general point in the moduli space $\mathcal{M}_g,$ with $g\,>\,1$. Let $G$ be a
connected compact subgroup of real dimension 1 of the Jacobian $J(C)$ of $C$. Take an even
theta characteristic $L$ of $C$. Then $G_L$ defined in \eqref{e5} is empty if and only
if $G$ is an $L$--even torus.
\end{theorem}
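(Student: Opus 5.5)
The direction ``$G_L=\emptyset \Rightarrow G$ is $L$--even'' is immediate from the observations before Definition \ref{Def_L-even}. If $G_L=\emptyset$, then $\zeta_G\notin G_L$, so $h^0(C,\,L\otimes\zeta_G)=0$. Since $0$ is even, $L'=L\otimes\zeta_G$ is an even theta characteristic. This direction does not use that $C$ is general.

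For the converse, assume $G$ is $L$--even and argue by contradiction. I will use the classical fact that on a general curve every even theta characteristic has no sections. Thus $h^0(L)=h^0(L')=0$, so neither $\mathcal{O}_C$ nor $\zeta_G$ lies in $G_L$. Suppose some $\zeta\in G$ satisfies $h^0(L\otimes\zeta)>0$.

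The plan is to spread the situation over a family whose classifying map \eqref{e2} is dominant.
\begin{enumerate}
\item The subgroup $G$ is a real one-dimensional subtorus, so it is the image of a line $\mathbb{R}v$ with $v\in H^1(C,\mathbb{Z})$ primitive. It therefore extends, over a simply connected open $U$ of a family $\pi$ with dominant $m$, to a \emph{flat} family of circles $G_y\subset J(C_y)$. In the trivialization \eqref{diffeo-tori} this family is constant, a fixed circle $S\subset T^{2g}$.
\item The spin section $L$ extends over $U$, after a finite base change, as an even spin section $\mathcal L$.
\item Consider the incidence set $\{(y,\zeta)\,:\,\zeta\in G_y,\ h^0(\mathcal L(y)\otimes\zeta)>0\}$. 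Its projection to $U$ contains general points by hypothesis. Passing to a dominant component and a finite base change $Y\to U$, I would produce a section $\zeta(y)\in G_y$ with $h^0(\mathcal L(y)\otimes\zeta(y))>0$.
\item Set $\psi:=\mathcal L\otimes\zeta$, a section of $\mathrm{Pic}^{g-1}(\pi)$ with $\dim H^0(C_y,\psi(y))>0$ for general $y$. Its normal function is
\[
\nu\ =\ 2\psi-\kappa\ =\ \zeta^{2},
\]
since $\mathcal L^{2}=\kappa$.
\end{enumerate}

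The key point is that $\zeta^2(y)\in G_y$ for every $y$. Hence $\phi\circ\nu$ takes values in the fixed circle $S$, so its image has real dimension at most $1$. By Hain's remark the rank of $\nu$ is an integer, so the real dimension of the image is even, hence $0$, and $\mathrm{rk}(\nu)=0$. Theorem \ref{coro-spin} then says $\psi$ is an odd spin section. Being spin means $\zeta(y)^2=0$, so $\zeta(y)$ is a $2$-torsion point of the circle $G_y$, namely $0$ or $\zeta_G$. Then $\psi$ equals $\mathcal L$ or $\mathcal L\otimes\zeta_G$, both of which are even. This contradicts the oddness of $\psi$, and proves $G_L=\emptyset$.

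The main obstacle is step 3: producing an honest holomorphic section $\psi$ over an algebraic family with dominant classifying map, as Theorem \ref{coro-spin} requires. The condition $\zeta\in G_y$ is only real-analytic, not holomorphic. I would try the following. The locus $\{(y,\xi)\in J(\pi)\,:\,h^0(\mathcal L(y)\otimes\xi)>0\}$ is the relative theta divisor, a complex hypersurface. Its intersection with the real-analytic flat family $\bigcup_y G_y$ projects onto an open set of $U$. A dimension count shows this intersection has real dimension equal to $\dim_{\mathbb R}U$. On a component mapping finitely and locally diffeomorphically onto $U$, one gets a local continuous, in fact real-analytic, section $\zeta(y)$.

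The rank argument above is purely about the image in $T^{2g}$. It shows $\zeta^2$ is a constant point of $S$ in flat coordinates, and a flat section of $J(\pi)$ is automatically holomorphic. So $\zeta$ is locally a constant flat, hence holomorphic, section, and by monodromy and finiteness it extends after a finite étale cover to an algebraic family. This is where I would expect to spend the most care.
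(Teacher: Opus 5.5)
\textbf{There is a genuine gap.} It sits at the step you flagged, and your proposed way around it is circular.

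The section $\zeta(y)\in G_y$ you extract from the incidence set is only real-analytic. So $\nu=2\psi-\kappa=2\zeta$ is not known to be a holomorphic normal function. Hain's remark that $\dim_{\mathbb R}(\phi\circ\nu)(U)$ is even rests on the fibres of $\phi\circ\nu$ being complex subvarieties, and this uses that $\nu$ is holomorphic. For a merely real-analytic section of the circle bundle, nothing prevents $\phi\circ\zeta$ from sweeping out an arc of $S$, which is exactly the case to be excluded. So you cannot use the parity argument to show $\zeta$ is flat and then infer from flatness that $\zeta$ was holomorphic. For the same reason, Theorem~\ref{coro-spin} cannot be applied to $\psi=\mathcal L\otimes\zeta$ until $\psi$ is known to be holomorphic. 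A secondary problem: your appeal to ``monodromy and finiteness'' to extend $\zeta$ has no basis when the point is not torsion.

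\textbf{The missing idea is to work with flat points of $S$ from the start.} This is what the paper does.
\begin{itemize}
\item For fixed $x\in S$, the locally constant section $s_x$ of $J(\pi)|_U$ is holomorphic, being the image in $\mathcal H/\mathcal F^1$ of a Gauss--Manin flat section.
\item Hence $Z_x=\{y\in W : h^0(\mathcal L(y)\otimes s_x(y))>0\}$ is a complex-analytic subset of $W$; take $W$ connected.
\item The bad set $X\subset S\times W$ has fibres $Z_x$ over $x\in S$ and projects onto the open set $W$.
\item If every $Z_x$ were proper, each would have real codimension at least $2$. Then $X$ would have real dimension at most $\dim_{\mathbb R}W-1$ and could not cover $W$.
\item Equivalently, in your notation $\zeta(y)=s_{t(y)}(y)$: if $t$ were nonconstant, a regular level set $t^{-1}(c)$ would be a real hypersurface inside $Z_c$, forcing $Z_c=W$.
\end{itemize}
So there is some $x$ with $Z_x=W$. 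Then $\psi=\mathcal L\otimes s_x$ is a holomorphic section whose normal function $2s_x$ is flat, and Theorem~\ref{coro-spin} applies directly over $W$. Your final step then goes through as in the paper: $x$ is $2$-torsion, $x\neq 0$ because $h^0(L)=0$, hence $x=\zeta_G$ and $G$ is $L$--odd.

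\textbf{You also need to justify why one flat family has a bad locus containing an open set.} ``Contains general points by hypothesis'' does not give this. The real one-dimensional subtori are indexed by countably many primitive classes, and each bad locus $K_v$ is closed because the projection is proper. If the theorem failed on a non-meagre set of curves, Baire category would give some $K_v$ with nonempty interior, and that is the situation the argument above rules out.
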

 
\begin{proof}
If $G_L$ defined in \eqref{e5} is empty, then $G$ is evidently an $L$--even torus. We will
prove the converse. So assume that the torus $G$ is $L$--even.

For a general curve $C'$, we have $H^0(C',\,L')\,=\,0$ for every even theta-characteristic $L'$ on
$C'$ (see \cite[Theorem 1.10]{harris_theta}).
Let $\pi \, :\, \mathcal{C}\, \longrightarrow\, Y$ be a smooth family of complex projective curves, of genus
$g$, such that corresponding classifying map $m \, :\, Y \,\longrightarrow\, \mathcal{M}_g$ (see \eqref{e2})
is dominant. Fix a base point $0 \,\in\, Y$, and set $C \,:=\, C_0$. Let $J(\pi)$ be the
Jacobian fibration constructed in \eqref{ej}.

Over any simply connected open subset $U \,\subset\,
Y$ containing the base point $0$, the Jacobian fibration trivializes smoothly via the
diffeomorphism \eqref{diffeo-tori} as
\begin{equation}\label{ej2}
J(\pi)\big\vert_U \ \simeq\ T^{2g}\times U,\qquad T^{2g}\ =\ J(\pi)_0\ =\ (S^1)^{2g}.
\end{equation}
Let
\begin{equation}\label{e6}
G(\pi)\, \ \subset\,\ J(\pi)
\end{equation}
be a family of real one-dimensional subtori. We assume that $G(\pi)$ is flat with respect to the natural
flat connection induced by the trivialization \eqref{diffeo-tori}. Equivalently, $G(\pi)$
is a locally constant family of tori, meaning that for a local trivialization of $J(\pi)$
as in \eqref{ej2}, the subbundle $G(\pi)$ is of the form
\[
G(\pi)\big\vert_U\, \;\simeq\, \; {\mathbb S}^1 \times U\,\; \subset\,\; T^{2g} \times U,
\]
where ${\mathbb S}^1\, \subset\, T^{2g}$ is a subgroup isomorphic to $S^1$.

Using the trivialization $J(\pi)\big\vert_U\,\simeq\, T^{2g} \times U$ in \eqref{ej2}, all real $1$--dimensional
subtori of $T^{2g}$ can be enumerated by nonzero vectors $v \,\in\, \text{Hom}({\mathbb Q},\,
H_1(T^{2g},\, {\mathbb Q}))\, =\, \mathbb{Q}^{2g}$ up to scaling. Each such $v$ defines a family of subtori
\[
G_v(\pi)\big\vert_U\,\ :=\,\ S^1_v \times U\,\ \subset\,\ J(\pi)\big\vert_U,
\]
giving a countable collection of families of real one–dimensional subtori of $J(\pi)$ locally on $U$;
for notational simplicity, we shall denote this subtori by $G_v$. The fiber $(G_v)_y \,\in \,
\mathrm{Pic}^0(C_y)$ of $G_v$ over any $y\, \in\, U$ is identified with $S^1_v \,\subset\, T^{2g}$.

Moreover, the relative Picard variety $\mathrm{Pic}^{g-1}(\pi)$ trivializes over $U$, and the theta
characteristic $L$ on $C$ extends to a section of $\mathrm{Pic}^{g-1}(\pi)\big\vert_U$, which we
shall denote again by $L$.

Finally, we consider the $L$--even tori and define
\begin{equation}\label{cv}
K_v\,\ :=\,\ \{y \ \in\ U\ \, \big\vert\,\ G_v \cap \Theta_{L(y)} \,\neq\,
\emptyset\, \ \text{ and }\,\ G_v\, \text{ is $L$--even}\}
\end{equation}
for any $v \,\in\, \text{Hom}({\mathbb Q},\,
H_1(T^{2g},\, {\mathbb Q})) \setminus \{0\}\,=\, \mathbb{Q}^{2g}\setminus\{0\}$. Since the natural
projection $J(\pi)\big\vert_U\, \longrightarrow\, U$ is proper, each $K_v$ is closed in $U$.
We assume by contradiction that
\[
\bigcup_{\{v\big\vert\, G_v \text{ is } L\text{-even}\}} K_v\,\ =\,\ U.
\]
It follows that $U$ is a countable union of the closed sets $K_v$. By Baire Category Theorem, there
exists some $$\overline{v}\ \in\ \text{Hom}({\mathbb Q},\,
H_1(T^{2g},\, {\mathbb Q})) \setminus \{0\}\ =\ \mathbb{Q}^{2g}\backslash\{0\}$$ and an open subset $W\,
\subset\, U$ such that $W \,\subset\, K_{\overline{v}}$ (see \eqref{cv}). Let
$G \,=\, G_{\overline{v}}.$ Consequently, we have $h^0(L(y) \otimes \zeta_G)\, =\, 0$
for a general point $y \,\in\, W$.

Define
\[
X\ :=\ \bigcup_{y\in W} G\cap \Theta_{L(y)} \ \subset\ G \times W,
\]
and denote by $\varpi_1$ (respectively, $\varpi_2$) the projection of $G \times W$ to $G$ (respectively, $W$).
Now for $\varpi_1(X)$, we have following two possibilities: Either it is completely disconnected set,
or $\varpi_1(X)$ contains an interval $I \,\subset\, G \,\simeq\, S^1$.

In the second case if $x \,\in\, I$ is general, then $Z \,:= \, \varpi_2(\varpi ^{-1}_{1}(x))$ would contains
a topological (real) codimension $1$ subspace of $W$. However, since $\varpi ^{-1}_{1}(x)$ is a complex
manifold \cite[Sections~2--5]{Hain-moduli}, we may assume in both cases --- shrinking $W$ if necessary
--- that there is some $x\,\in\, G$ such that $\varpi_2(\varpi ^{-1}_{1}(x))\,=\, W$. Then there exists a section
$$\gamma\ :\ W\ \longrightarrow\ X\ \subset\ G \times W$$ such that $\varpi_1\big(\gamma(y)\big)
\,=\, x$ for all $y \,\in\, W$, and
\[
L(y)\otimes \gamma(y)\ \, \simeq\ \, \mathcal{O}_{C_y}(D_y)
\]
for $y \,\in\, W$, where $D_y$ is an effective divisor of degree $g-1$.
Therefore, we can define a section $\psi$ of the $\mathrm{Pic}^{g-1}(\pi)$ by
\[
y\ \, \longmapsto\ \, L(y)\otimes \gamma(y)\ \, =\ \, \mathcal{O}_{C_y}(D_y).
\]
Now $\gamma(y)\, \in\, G\,\subset\, \mathrm{Pic}^0(C_y)$ is locally constant as $y$ moves. In fact
$\varpi_1(\gamma(y))\,=\,x$ is a constant on $W$ by construction. Since $D_y$ is effective, from Corollary
\ref{coro-spin} it follows immediately that $D_y$ is an odd theta characteristic. Moreover
$\gamma(y) \,=\, L(y)\otimes\mathcal{O}_{C_y}(-D_y)$ is a non-trivial point of order two, as $L$ is even and
$H^0(C,\, L)\,=\,0.$ Then $\gamma(y)\, =\, \zeta_G$ for every $y \,\in\, W$. Thus $G$ is an $L$--odd torus. This
contradicts our hypothesis that $G$ is an $L$--even torus.
\end{proof}

\paragraph{Acknowledgments}

L. Fassina and G.P. Pirola are members of GNSAGA (INdAM). I. Biswas
is partially supported by a J. C. Bose Fellowship (JBR/2023/000003).

\end{document}